\chardef\bslash=`\\ 
\def\verbatim{\interlinepenalty\@M \@verbatim
  \leftskip\@totalleftmargin\advance\leftskip2pc
  \frenchspacing\@vobeyspaces \@xverbatim}
\newtheorem{thm}{Theorem}[section]
\newtheorem{cor}[thm]{Corollary}
\newtheorem{lem}[thm]{Lemma}
\newtheorem{pro}[thm]{Proposition}
\newtheorem{defin}[thm]{Definition}
\newtheorem{rem}[thm]{Remark}
\newtheorem{ex}[thm]{Example}
\newtheorem{que}[thm]{Question}
\newtheorem{problem}[thm]{Problem}
\begin{document}

\centerline{\it In memory of my dearest friend Natashka Gamzina-Kandaurova}
\centerline{\it May your light guide me to kindness}

\title
{A Glance into the Anatomy of Monotonic Maps}
\author{Raushan  Buzyakova}
\email{Raushan\_Buzyakova@yahoo.com}

\keywords{ monotonic map, ordered topological spaces,   topologically equivalent maps}
\subjclass{ 26A48, 54F05, 06B30 }


\begin{abstract}{
Given an autohomeomorphism on an ordered topological space  or its subspace, we show that  it is sometimes possible to introduce a new topology-compatible order on that space so that the same map is monotonic with  respect to the new ordering. We note that the  existence of such a re-ordering for a given map is equivalent to  the map being conjugate (topologically equivalent) to a monotonic map on some  homeomorphic ordered space. We observe that the latter cannot  always be chosen to be order-isomorphic to the original space.
Also, we identify other routes that may lead to similar affirmative statements for other classes of spaces and maps.

}
\end{abstract}

\maketitle
\markboth{R. Buzyakova}{A Glance into the Anatomy of Monotonic Maps}
{ }

\section{Introduction}\label{S:introduction}
\par\bigskip
It is one of classical problems of various areas of topology if a given continuous map on a topological space with perhaps a richer structure has nice properties related to this rich structure. For clarity of exposition, let us agree on terminology. An {\it autohomeomorphism} on a topological space $X$ is any homeomorphism of $X$ onto itself. An open interval with end points $a$ and $b$  of a linearly ordered set $L$ will be denoted by $(a,b)_L$. If it is clear what ordered set is under consideration, we simply write $(a,b)$. The same concerns other types of intervals. Linearly ordered topological spaces are abbreviated as LOTS and their subspaces as GO-spaces. We will mostly be concerned with GO-spaces. It is due to \v Cech (\cite{BL})  that a Hausdorff  space $X$  is a GO-space if and only if a family of convex sets with respect to some ordering on $X$ is a basis for the topology of  $X$. Given a GO-space $X$, an order $\prec$ on $X$ is said to be {\it GO-compatible} if some collection of $\prec$-convex subsets of $\langle X, \prec\rangle$ is a basis for the topology of $X$. Note that if $X$ is a LOTS, a GO-compatible order on $X$ need not witness the fact that $X$ is a LOTS. We will be concerned with the following general problem.
\par\bigskip\noindent
\begin{problem}\label{problem:main}
Let $X$ be a GO-space and let $f$ be an autohomeomorphism on $X$. What conditions on $X$ and/or $f$ guarantee that  $X$ has a GO-compatible ordering with respect to which  $f$ is monotonic? 
\end{problem}

\par\bigskip\noindent
Since monotonicity is an order-dependent concept, we will specify with respect to which ordering a map is monotonic. If no clarification is given, the assumed order is the original one and should be clear from the context.
Since our discussion will be around Problem \ref{problem:main}, we will isolate the target property into a definition.

\par\bigskip\noindent
\begin{defin}\label{def:potmon}
An autohomeomorphism  $f$ on a GO-space is potentially monotonic if there exists a GO-compatible order on $X$  with respect to which $f$ is monotonic.
\end{defin}

\par\bigskip\noindent
Definition \ref{def:potmon}  is equivalent to the following definition:

\par\bigskip\noindent
\begin{defin}\label{def:potmon1}
(Equivalent to \ref{def:potmon})
An autohomeomorphism $f$ on a GO-space is potentially monotonic if there exists a GO-space  $Y$,  a homeomorphism $h:X\to Y$, and a monotonic autohomeomorphism  $m$ on $Y$ such that $f=h^{-1}\circ m\circ h$.
\end{defin}

\par\bigskip\noindent
To see why these two definitions are equivalent, let $f$ be an autohomeomorphism on a GO-space $X$. Assume $f$ is potentially monotonic by Definition \ref{def:potmon}. Fix a GO-compatible order $\prec$ on $X$ with respect to which $X$ is monotonic. Put $Y = \langle X, \prec\rangle$, $h = id_X$ (the identity map), and $m=f$. Clearly, $f = id_X^{-1}\circ m\circ id_X$. Hence, $f$ is potentially monotonic with respect to Definition \ref{def:potmon1}. We now assume that $f$ is potentially monotonic with respect to Definition \ref{def:potmon1}. Fix $Y,f,m$ as in the definition. The order on $Y$ induces an order $\prec$  on $X$ as follows: $a\prec b$ if and only if $h(a)<h(b)$. Since $h$ is a homeomorphism, $\prec$ is compatible with the GO-topology of $X$. Next,  let us show that $f$ is $\prec$-monotonic. We have $a\prec b$ is equivalent to $h(a)<h(b)$.  By the choice of $m$, the latter is equivalent to $m\circ h(a) < m\circ h(b)$. By the definition of $\prec$, the latter is equivalent to 
 $h^{-1}\circ m\circ h(a) \prec  h^{-1}\circ m\circ h(b)$. Since $f = h^{-1}\circ m\circ h$, we conclude that $f(a)\prec f(b)$.

\par\bigskip\noindent
One may wonder if the property in Definition \ref{def:potmon}  is equivalent to the property of being topologically equivalent to a monotonic map with respect to the existing order. Recall that homeomorphisms $f,g:X\to X$ are {\it topologically equivalent} (or conjugate)if there exists a homeomorphism $t:X\to X$ such that $t\circ f = g\circ t$. A supported explanation will be given later in Remark \ref{rem:rem1} that a map can be  potentially monotonic but not  topologically equivalent to a monotonic map (with respect to the existing order). It is clear, however, from Definition \ref{def:potmon1}   that a map  topologically equivalent to a monotonic map is potentially monotonic.
In our arguments, given a monotonic function $f$ on a GO-space $L$ and an $x\in L$,  we will make a frequent  use of the set $\{f^n(x): n\in \mathbb Z\}$. In literature,  similarly defined sets are often referred to as the orbit of $x$ under $f$. We will also refer to this set as {\it the $f$-orbit of $x$}. Similarly, the {\it $f$-orbit} of  a set $A\subset X$ is the collection $\{f^n(A): n\in \mathbb Z\}$.
By looking at the behavior of monotonic maps on the reals, we quickly observe that the orbit of each point under such maps exhibits very strong properties. Namely, the following holds.

\par\bigskip\noindent
\begin{pro}\label{pro:strongdiscreteorbit}
Let $f$ be a fixed-point free monotonic autohomeomorphism on a GO-space $L$ and $x\in L$. Then there exists an open neighborhood $I$ of $x$ such that $f^n(I)\cap  f^m(I)=\emptyset$ for any distinct integers $n$ and $m$ and  $\{f^n(I): n \in \mathbb Z\}$ is a discrete family.
\end{pro}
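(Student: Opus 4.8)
The plan is to reduce to one orientation and then build $I$ by hand from the order. We treat the order-preserving case; replacing $f$ by $f^{-1}$ if necessary (which changes neither the orbits nor the family $\{f^n(I)\}$), we arrange $f(x)>x$. Because $f$ is an increasing homeomorphism this inequality propagates: applying $f$ and $f^{-1}$ repeatedly gives $\cdots<f^{-1}(x)<x<f(x)<f^2(x)<\cdots$, a strictly increasing two-sided orbit; in particular all orbit points are distinct and $f^{-1}(x)<x$, so $x$ is not the least element of $L$.

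Next I would construct $I$ as a convex neighborhood of $x$ with $I\subseteq(f^{-1}(x),f(x))$ and, crucially, with the \emph{strict} separation $\sup I<\inf f(I)=f(\inf I)$. Here a short case analysis on the local order structure of $x$ is unavoidable: if $x$ is a two-sided limit point I take $I=(a,b)$ with $f^{-1}(x)<a<x<b<f(x)$, choosing $b\in(x,f(x))$ and then, by continuity of $f$, an $a<x$ with $f(a)>b$; if $x$ is isolated from one side I instead take the appropriate half-open interval $[x,b)$ or $(a,x]$; and if $x$ is isolated I take $I=\{x\}$. In each case $I$ is open, lies in $(f^{-1}(x),f(x))$, and satisfies $\sup I<\inf f(I)$. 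Applying the increasing map $f^n$ to the separating inequality gives $\sup f^n(I)<\inf f^{n+1}(I)$, so consecutive iterates are separated by a genuine gap; by transitivity the family $\{f^n(I)\}$ is linearly ordered along $L$ and pairwise disjoint, which is the first conclusion. Moreover, since $I\subseteq(f^{-1}(x),f(x))$, each iterate is trapped between neighboring orbit points: $f^n(I)\subseteq(f^{n-1}(x),f^{n+1}(x))$.

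For discreteness the conceptual heart is that \emph{the orbit has no accumulation point in} $L$. Indeed, a cluster point $p$ of the (strictly increasing) forward orbit must satisfy $f^n(x)<p$ for all $n$ and $f^n(x)\to p$; continuity of $f$ then yields $f(p)=\lim_n f(f^n(x))=\lim_n f^{n+1}(x)=p$, contradicting that $f$ is fixed-point free, and the backward orbit is ruled out by the same argument applied to $f^{-1}$. Thus the orbit is closed and discrete. Combined with the trapping $f^n(I)\subseteq(f^{n-1}(x),f^{n+1}(x))$, this forces the family $\{f^n(I)\}$ to be locally finite: if some $y$ met infinitely many iterates, the enclosing windows $(f^{n-1}(x),f^{n+1}(x))$ would push orbit points into every neighborhood of $y$, making $y$ an accumulation point of the orbit. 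Finally, using the strict gaps between consecutive iterates, any point lying in an iterate, on its boundary, or in a gap has a convex neighborhood meeting at most one member, which upgrades local finiteness to discreteness.

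\textbf{Main obstacle.} The accumulation argument is clean; the genuinely fiddly part is the GO-space bookkeeping in the construction of $I$. One must decide, according to whether $x$ is a two-sided limit point, one-sided, or isolated, whether $I$ should be an open interval, a half-open interval, or the singleton $\{x\}$, and then verify in each case that the \emph{strict} inequality $\sup I<\inf f(I)$ can be arranged. This strictness is exactly what prevents two consecutive iterates $f^k(I)$ and $f^{k+1}(I)$ from merely sharing a common two-sided limit point, a ``kiss'' at which every neighborhood would meet both members and destroy discreteness; checking that the endpoints can always be chosen to avoid this is where the real work lies.
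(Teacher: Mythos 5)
Your proof is correct, and in outline it is the same as the paper's: split on whether $x$ is isolated, trap a convex neighborhood $I$ of $x$ inside $(f^{-1}(x),f(x))$, get pairwise disjointness from monotonicity, and get discreteness from the fact that an accumulation point of the orbit would be a fixed point of $f$ (monotone convergence plus continuity). But your one deviation from the paper is not cosmetic: it repairs an actual defect in the paper's argument. In the non-isolated case the paper takes $I=(a,f(a))$ with $a\in(f^{-1}(x),x)$, so that consecutive iterates $f^n(I)=(f^n(a),f^{n+1}(a))$ and $f^{n+1}(I)$ abut at the point $f^{n+1}(a)$, and it dismisses discreteness with ``proved as in the previous case.'' That family is pairwise disjoint but in general not discrete: for $L=\mathbb R$, $f(t)=t+1$, $a=0$ it is $\{(n,n+1):n\in\mathbb Z\}$, and every neighborhood of an integer meets two members; the fixed-point argument cannot rescue this, because an abutment point is a limit of only the two adjacent iterates, not of a tail of the family, so no fixed point arises. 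Your strict separation $\sup I<\inf f(I)$ --- concretely $I=(a,b)$ with $b<f(a)$, arranged via continuity of $f$ at $x$ --- places the two points $f^n(b)<f^{n+1}(a)$ of $L$ strictly between consecutive iterates, and with those gap points a convex neighborhood of any point really can be shrunk to meet at most one member; this is exactly the ``kiss'' phenomenon you flag, and avoiding it is what makes the discreteness claim true of your $I$ while it is false of the paper's. Two small remarks. First, in an arbitrary GO-space $\sup I$ and $\inf f(I)$ need not exist, so the separation should be phrased as: every point of $I$ precedes $b$, every point of $f(I)$ succeeds $f(a)$, and $b<f(a)$; that is what your argument actually uses, and your construction supplies it. Second, your reduction to the order-preserving case (which you share with the paper's ``without loss of generality'') is a genuine restriction rather than a harmless normalization: for order-reversing $f$ the statement is false --- $f(t)=-t$ on $\mathbb R\setminus\{0\}$ is a fixed-point free monotonic autohomeomorphism with $f^2=\mathrm{id}$, so no neighborhood $I$ can satisfy $I\cap f^2(I)=\emptyset$ --- hence ``monotonic'' in the proposition must be read as ``order-preserving,'' for your proof and the paper's alike.
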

\begin{proof}
Without loss of generality, we may assume that $f$ is strictly increasing. Fix $x\in L$. We have two cases.
\begin{description}
	\item[\rm Case ($x$ is isolated)] Let us show that  $I = \{x\}$ is as desired. By strict monotonicity, $f^n(x)\not = f^m(x)$ for distinct integers $n$ and $m$. To show that $S=\{f^n(\{x\}): n\in \mathbb Z\}$ is a discrete family of sets, fix $y\in L$. If $y=f^n(x)$ for some $n$, then $\{y\}$ is an open neighborhood of $y$ that meets exactly one  element of the collection, namely, $f^n({x})$. Assume $y\not = f^n(x)$ for any $n$ and $y$ is a limit point for $ S$. By monotonicity, $\lim\limits_{n\to \infty}f^n(x)=y$ or $\lim\limits_{n\to \infty}f^{-n}(x)=y$. By continuity, $f(y)=y$, contradicting the fact that $f$ is fixed-point free.
	\item[\rm Case ($x$ is not isolated)] Since $f$ is an increasing homeomorphism, the intervals  $ (x, f(x))$ and $(f^{-1}(x),x)$  are  not empty. Pick and fix $a\in (f^{-1}(x), x)$. Since $f$ is strictly increasing, $f(a)\in (x, f(x))$.  Let us show that  $I=(a,f(a))$ is as desired. First due to monotonicity, $f^n(I) = (f^n(a), f^{n+1}(a))$. Therefore, $f^n(I)$ misses $f^m(I)$ for distinct $n,m\in \mathbb Z$. The fact that $\{f^n(I): n\in \mathbb Z\}$  is a discrete collection is proved as in the previous case.
\end{description}
\end{proof}

\par\bigskip\noindent
We will next isolate the necessary condition identified in Proposition \ref{pro:strongdiscreteorbit} into a property. 

\par\bigskip\noindent
\begin{defin}
Let $f:X\to X$ be a map and $A\subset X$. The $f$-orbit of $A$ is strongly discrete if there exists an open neighborhood $U$ of $A$ such that $\{f^n(U):n\in \mathbb  Z\}$ is a discrete collection and $f^n(U)\cap f^m(U)=\emptyset$ for distinct $n,m$. The $f$-orbit of $x\in X$ is strongly discrete if the $f$-orbit of $\{x\}$ is strongly discrete.
\end{defin}

\par\bigskip\noindent
In this note we will present partial results addressing Problem \ref{problem:main}. At the end of our study we will identify a few questions that may have a good chance for an affirmative resolution.

\par\bigskip\noindent
In notation and terminology we will follow \cite{Eng}. In particular, if $\prec$ is an order on $L$ and $A,B\subset L$, by $A\prec B$ we denote the  fact that $a\prec b$ for any $a\in A$ and $b\in B$.

\par\bigskip

\section{Study}\label{S:study}

\par\bigskip\noindent
One may wonder if our introduction of the concepts of strongly discrete orbits is really necessary. Can we use the requirement of being "period-point free" instead?  The next example shows that a periodic-point free  automohomeomorphism even on a nice space  need not have strongly discrete orbits.

\par\bigskip\noindent
\begin{ex}
There exist a periodic-point free autohomeomorphism $f$ of the space of rationals $\mathbb Q$ and a point $q\in \mathbb Q$ such that the $f$-orbit of $q$ is not strongly discrete. 
\end{ex}
\begin{proof}
Example  \cite[Example 2.5 ]{Buz} provides a construction of a fixed point autohomeomorphism $f$ on the rationals that
satisfies the hypothesis  of Lemma \cite[Lemma 2.4]{Buz}. For convenience, the cited  hypotheses is copied next:

\par\bigskip\noindent
\underline {Hypothesis of Lemma \cite[Lemma 2.4]{Buz}}: {\it  "Suppose $f:\mathbb Q\to \mathbb Q$ is not an identity map and $p\in \mathbb Q$ satisfy the following property:

\par\smallskip\noindent
(*) $\forall n>0\exists m>0$ such that $f^{m+1}((p-1/n,p+1/n)_\mathbb Q)$ meets $f^{-m}((p-1/n,p+1/n)_\mathbb Q)$."}

\par\bigskip\noindent
Clearly an $f$ that satisfies the above hypothesis fails  having a strong $f$-orbit at $p$.
\end{proof}

\par\bigskip\noindent
For our next affirmative result we need  a technical statement that incorporates our general strategy for showing that a map is potentially monotonic.

\par\bigskip\noindent
\begin{lem}\label{lem:general}
Let $L$ be a GO-space and $f:L\to L$ an autohomeomorphism. Suppose that  $\mathcal O$ is a collection of clopen subsets of $L$ with the following properties:
\begin{enumerate}
	\item The $f$-orbit of each $O\in \mathcal O$ is strongly discrete.
	\item $f^n(O)\cap f^m(O')=\emptyset$ for distinct $O,O'\in \mathcal O$ and $n,m\in \mathbb Z$.
	\item $\{f^n(O): n\in \mathbb Z, O\in \mathcal O\}$ is a cover of $L$ .
\end{enumerate}
Then there exists a GO-compatible  order $\prec$ on $L$ with respect to which $f$ is strictly increasing.
\end{lem}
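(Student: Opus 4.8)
The plan is to use the three hypotheses to cut $L$ into clopen pieces and then assemble a linear order ``level by level''. First I would note that the sets in $\{f^n(O):O\in\mathcal O,\ n\in\mathbb Z\}$ are pairwise disjoint: within a single orbit this is part of the strong discreteness in (1), and across distinct $O,O'$ it is exactly (2). Each such set is clopen, since $O$ is clopen and $f$ is an autohomeomorphism, and by (3) they cover $L$. Hence this family is a partition of $L$ into clopen sets, so $L$ is the topological sum of these pieces; grouping the pieces of one orbit, $L$ is also the topological sum of the clopen \emph{orbit blocks} $U_O=\bigcup_{n\in\mathbb Z}f^n(O)$. For each $O$ I would define $h_O:U_O\to\mathbb Z\times O$ by $h_O(f^n(a))=(n,a)$; using disjointness of the pieces and injectivity of $f$ this is a well-defined bijection, it is a homeomorphism because both sides are topological sums of copies of $O$, and it conjugates $f|_{U_O}$ to the shift $s(n,a)=(n+1,a)$.

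Next I would put a GO-compatible order on each $\mathbb Z\times O$ making $s$ strictly increasing. For a single orbit I would fix a GO-compatible order on $O$ and embed $O$ as a subspace of a LOTS $\hat O$ that has both a minimum $\bot$ and a maximum $\top$ and induces the original GO-topology on $O$ (any GO-space embeds in a LOTS, and one may then adjoin two endpoints). In $\mathbb Z\times_{\mathrm{lex}}\hat O$ the point $(n,\top)$ is the immediate predecessor of $(n+1,\bot)$, so each level $\{n\}\times\hat O$ is clopen; intersecting with the subspace $\mathbb Z\times O$ keeps every level $\{n\}\times O$ clopen and carrying the topology of $O$, whence the order topology restricted to $\mathbb Z\times O$ is exactly the topological-sum topology. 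The shift $s$ raises the $\mathbb Z$-coordinate and is therefore strictly increasing.

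To combine the orbits I would fix any linear order on $\mathcal O$ and take the lexicographic ordered sum $\sum_{O\in\mathcal O}^{\mathrm{lex}}\bigl(\mathbb Z\times_{\mathrm{lex}}\hat O\bigr)$, with $\mathbb Z\times O$ sitting inside the $O$-th summand. The key point is that each summand $\mathbb Z\times O$ has \emph{neither} a largest \emph{nor} a smallest element (because of the $\mathbb Z$-factor); hence every one of its points lies in an open interval contained in that summand, so each orbit block is clopen in the sum no matter how $\mathcal O$ is ordered and no accumulation between distinct orbits can occur. Thus the global order topology is again the topological sum of the orbit blocks, i.e.\ the topology of $L$, and pulling this order back along the $h_O$'s yields a GO-compatible order $\prec$ on $L$. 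Finally $f$ is strictly $\prec$-increasing: inside each orbit block it is the shift, and it maps each block to itself while the blocks are arranged as order intervals, so all relations are preserved.

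The step I expect to be the real obstacle is matching the order topology to the given topology at the junctions between consecutive pieces. Naive concatenation fails: stacking copies of a half-open interval such as $(0,1]$ makes the maximum of one level an accumulation point of the next, destroying clopenness and hence the topology. The device above circumvents this by adjoining the scaffolding endpoints $\bot,\top$, which convert each level junction into a genuine jump, and by exploiting the absence of extreme points of the orbit blocks to separate distinct orbits; extra care is needed because the pieces are GO-spaces rather than LOTS, which is why $O$ must first be realized inside a LOTS that preserves its (possibly non-order) topology.
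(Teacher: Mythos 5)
Your proof is correct, and it builds what is in essence the same order as the paper's: inside each piece $f^n(O)$ a transported copy of a fixed GO-compatible order on $O$, the pieces of one orbit concatenated according to $\mathbb Z$, and the orbit blocks concatenated according to an arbitrary linear order on $\mathcal O$. Where you genuinely diverge is in the verification. The paper defines $\prec$ directly on $L$ (Stage 1: push $<|_O$ forward by $f^n$; Stages 2--3: concatenate) and then checks GO-compatibility in one stroke: the images $f^n(I)$ of the $<$-convex open subsets $I\subseteq O$ are open, $\prec$-convex, and form a basis of the topology of $L$, which is all that the paper's definition of a GO-compatible order demands. You instead conjugate $f$ to the shift on a model space $\bigoplus_{O}(\mathbb Z\times O)$, embed each $O$ in a LOTS $\hat O$ with adjoined endpoints $\bot,\top$, realize everything inside the explicit LOTS $\sum_O(\mathbb Z\times_{\mathrm{lex}}\hat O)$, and pull the order back --- i.e.\ you argue via Definition \ref{def:potmon1} rather than Definition \ref{def:potmon}. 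This works, but note that the ``real obstacle'' you flag at the end is not actually an obstacle under the paper's notion of GO-compatibility: one does \emph{not} need the $\prec$-order topology to reproduce the given topology, only that \emph{some} family of $\prec$-convex sets be a basis for it. In your $(0,1]$-stacking example the naive concatenation indeed has a coarser order topology (the top of one level is an order-limit of the next level), yet it is still GO-compatible, because the convex open subsets of the individual levels already form a basis of the sum topology. Recognizing this is exactly what makes the paper's Claim 1 a few lines long; your $\bot,\top$ scaffolding buys something slightly stronger --- an explicit LOTS extension inducing the topology of $L$, i.e.\ the subspace picture guaranteed abstractly by \v Cech's characterization --- at the cost of the extra embedding machinery.
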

\begin{proof} By $<$ we denote some ordering with respect to which $L$ is a generalized ordered space. Enumerate elements of $\mathcal O$ as $\{O_\alpha:\alpha<|\mathcal O|\}$. We will define $\prec$ in three stages.

\par\medskip\noindent
\begin{description}
	\item[\it Stage 1] For each  $O\in \mathcal O$ and $n\in \omega\setminus \{0\}$, define $\prec$ on $f^n(O)$  and $f^{-n}(O)$ recursively as follows:

\begin{description}
	\item[\rm Step 0] Put $\prec|_O = <|_O$.

	\item[\rm Assumption] Assume that $\prec$ is defined on $f^k(O)$ and on $f^{-k}(O)$ for all $k=0,1,...,n-1$.

	\item[\rm Step $n$] If $x,y\in f^n(O)$, put $x\prec y$ if and only if $f^{-1}(x) \prec f^{-1}(y)$. This is well defined since $f^{-1}(x), f^{-1}(y)$ are in $f^{n-1}(O)$ and $\prec$ is defined on $f^{n-1}(O)$ by assumption. Similarly,  if $x,y\in f^{-n}(O)$, put $x\prec y$ if and only if $f(x) \prec f(y)$.
\end{description}

	\item[\it Stage 2] For any $O\in \mathcal O$ and any $n,m\in \mathbb Z$ such that $n<m$, put $f^n(O) \prec f^m(O)$.
	\item[\it Stage 3] For any $\alpha<\beta<|\mathcal O|$ and any $n,m\in \mathbb Z$, put $f^n(O_\alpha)\prec f^m(O_\beta)$.
\end{description}
The next two claims show that $\prec$ is as desired.
\par\smallskip\noindent
{\it Claim 1. $\prec$ is compatible with the GO-topology of $L$.}
\par\smallskip\noindent
{\it Proof of Claim}. To prove the claim, for each $O\in \mathcal O$, let $\mathcal T_O$ be the collection of all  $<$-convex open subsets of $L$  that are subsets of $O$. Since $<$ coincides with $\prec$ one every $O\in \mathcal O$, we conclude that every element in $\mathcal T_O$ is $\prec$-convex. By  the constructions at Stage 1, $f^n(O)$ is $\prec$-convex. Since $f$ is an autohomeomorphism, the collection $\{f^n(I): I\in\mathcal T_O, O\in \mathcal O\}$ is a basis for the topology of $L$ and consists of open $\prec$-convex sets. The claim is proved. 
\par\smallskip\noindent
{\it Claim 2. $f$ is increasing with respect to $\prec$.}
\par\smallskip\noindent
{\it Proof of Claim}. Pick distinct $x$ and $y$. If $x,y\in \bigcup_nf^n(O)$ for some  $O\in \mathcal O$, then apply Stages 1 and 2. Otherwise, apply Stage 3.
\end{proof}

\par\bigskip\noindent
{\bf Remark to Lemma \ref{lem:general}.} {\it Note that if  $\langle L, <\rangle$ is a LOTS and each $O$ in the argument of the lemma has both extremities or each $O$ has neither extremity, then $\langle L, \prec\rangle$ is a LOTS too.}

\par\bigskip\noindent
The converse of Lemma \ref{lem:general} for fixed-point free autohomeomorphisms on zero-dimensional GO-spaces holds too (Lemma \ref{lem:generalconverse}). To prove the converse, we need the following  quite technical statement. Recall that given a continuous  self-map $f:X\to X$, a closed set $A\subset X$ is an {\it $f$-color} if $A\cap f(A)=\emptyset$. For a review of major results on colors of continuous maps, we refer the reader to \cite{vM}.

\par\bigskip\noindent
\begin{pro}\label{pro:maximalcolor}
Let $L$ be a zero-dimensional GO-space, $f:L\to L$  a fixed-point free monotonic homeomorphism, and $x\in L$. Then there exists a maximal convex clopen set $I\subset L$ containing $x$ such that the following hold:
\begin{enumerate}
	\item $\bigcup_{n\in \mathbb Z} f^n(I)$ is clopen and convex.
	\item $f^n(I)\cap f^m(I)=\emptyset$ for any distinct integers $n$ and $m$. 
	\item $f^n(I)$ is a maximal clopen convex $f$-color for any $n\in \mathbb Z$.
\end{enumerate}
\end{pro}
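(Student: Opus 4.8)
The plan is to normalize the map, build the invariant clopen convex hull of the orbit (which will be the set in (1)), and then carve $I$ out of it as a ``half-open'' fundamental domain for the $\mathbb Z$-action generated by $f$. First I would normalize. Exactly as in the proof of Proposition~\ref{pro:strongdiscreteorbit} I may assume $f$ is strictly increasing. Replacing $f$ by $f^{-1}$ if necessary I may also assume $x<f(x)$; this costs nothing, since $A\cap f(A)=\emptyset$ if and only if $A\cap f^{-1}(A)=\emptyset$, so the notion of an $f$-color is unchanged, and all three conclusions are symmetric in $f$ and $f^{-1}$. With $x<f(x)$ and $f$ increasing the orbit $\{f^n(x):n\in\mathbb Z\}$ is strictly increasing, and, by the argument already used in Proposition~\ref{pro:strongdiscreteorbit}, it is closed and discrete with \emph{no} supremum and \emph{no} infimum in $L$: a supremum or infimum would be a limit point of the orbit, hence a fixed point of $f$ by continuity.

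Next I would produce the set for conclusion (1). Let $H$ be the order-convex hull of the orbit. It is convex by definition, and $f(H)=H$ because $f$ is an order isomorphism carrying the orbit onto itself. The crucial point is that $H$ is clopen: writing $U^{+}$ (resp. $U^{-}$) for the set of points of $L$ above (resp. below) the entire orbit, the absence of a supremum and an infimum forces $U^{+}$ and $U^{-}$ to be closed, since any would-be limit point of $U^{+}$ inside $H$ is separated from $U^{+}$ by an orbit point; hence $H=L\setminus(U^{+}\cup U^{-})$ is clopen. This is the first place where discreteness of the orbit and the GO structure are essential, and I expect the bookkeeping with Dedekind gaps at the two ``ends'' of the orbit to be the first delicate point.

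Now I would carve out the cell. Using Proposition~\ref{pro:strongdiscreteorbit} together with zero-dimensionality, fix a clopen convex $C$ with $x\in C\subseteq H$ and $C\cap f(C)=\emptyset$; since $C$ is convex, clopen, and $x<f(x)$, one gets $C<f(C)$. Let $\uparrow C=\{y\in H:\ y\ge c\text{ for some }c\in C\}$ be the final segment of $H$ generated by $C$. The key technical claim is that $\uparrow C$ is clopen: because $C$ is clopen, its bottom is either a minimum below which $H$ has no points immediately (a jump, or the least element) or a Dedekind gap, and in either case $\uparrow C$ is simultaneously open and closed. Granting this, set
\[
 I:=\uparrow C\setminus\uparrow f(C).
\]
Then $I$ is clopen (a difference of clopen sets), convex (a difference of the nested final segments $\uparrow f(C)\subseteq\uparrow C$), contains $x$, and satisfies $I\cap f(I)=\emptyset$; that is, $I$ is a clopen convex $f$-color.

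Finally I would read off the three conclusions. Since $f^{n}(I)=\uparrow f^{n}(C)\setminus\uparrow f^{n+1}(C)$, the family $\{f^{n}(I):n\in\mathbb Z\}$ telescopes: the sets are pairwise disjoint, giving (2), and their union equals $\bigcup_{n}\uparrow f^{n}(C)\setminus\bigcap_{n}\uparrow f^{n}(C)=H\setminus\emptyset=H$, which is clopen and convex by the second step, giving (1); here the union exhausts $H$ because the orbit is coinitial in $H$, and the intersection is empty because the orbit has no supremum. For (3) it suffices to treat $n=0$, because $f$ is an order isomorphism preserving clopenness, convexity, and the color property, so $f^{n}(I)$ is a maximal clopen convex $f$-color as soon as $I$ is. To see that $I$ is maximal, suppose $J\supsetneq I$ is a clopen convex $f$-color; as $x\in I\subseteq J$ and $J$ is convex, $J$ reaches above $I$ or below $I$, and since the cells $f^{k}(I)$ tile $H$ with no gaps, convexity forces $f(I)\subseteq J$ or $f^{-1}(I)\subseteq J$, whence $J\cap f(J)\ne\emptyset$, a contradiction. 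I would flag the clopenness of final segments such as $\uparrow C$, and of the hull $H$ — that is, the interaction of the color with the jump/gap structure of the GO-space — as the main obstacle; once that is secured, disjointness, the tiling of $H$, and maximality are formal.
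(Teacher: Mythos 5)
Your construction is correct in substance, but it takes a genuinely different route from the paper's. The paper's proof splits into two cases according to whether $L$ is locally compact: in the locally compact case it uses zero-dimensionality to choose a point $a$ at which $L$ splits into clopen rays and takes $I=[a,f(a))$, so that the tiles are the intervals $[f^{n}(a),f^{n+1}(a))$; in the non-locally-compact case it passes to the smallest ordered compactification $dL$, extends $f$ to $\tilde f\colon dL\to dL$, picks a non-fixed point $a\in(dL\setminus L)\setminus F$, and takes $I=[a,\tilde f(a)]_{dL}\cap L$. You avoid both the case split and the compactification machinery: you form the clopen convex $f$-invariant hull $H$ of the orbit and carve out the fundamental domain $I=\uparrow C\setminus \uparrow f(C)$ from a small clopen convex color $C\ni x$, in effect replacing the paper's single cut point $a$ by a clopen convex set, which exists unconditionally by Proposition \ref{pro:strongdiscreteorbit} plus zero-dimensionality. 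What this buys is uniformity and self-containedness (no ordered compactification, no extension theorem for monotone maps), a mechanical telescoping verification of (1)--(3), and the guarantee $x\in I$ built in --- a point the paper's sketch is casual about, since in its locally compact case $a$ is chosen with $x<a$, so $x\notin[a,f(a))$ as written. What the paper's route buys is brevity and very concrete tiles.

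One justification you give is faulty, although the claim it supports is true, so this is a repair rather than a fatal gap. You argue that $\uparrow C$ is clopen because the bottom of $C$ must be a jump or a Dedekind gap; in a GO-space this dichotomy is not exhaustive, because the topology may be strictly finer than the order topology (on the Sorgenfrey line $C=[0,1)$ is clopen, yet $0$ has neither a predecessor nor a gap below it). The correct argument is direct and needs no structure theory of cuts: $\uparrow C$ is open because it is the union of the open set $C$ with the rays $(c,\to)\cap H$, $c\in C$, and rays are open in any GO-space (Hausdorffness plus a base of convex sets forces the topology to refine the order topology); and $\uparrow C$ is closed in $H$ because if $y\in H$ lies below every point of $C$, then any convex open neighborhood $W$ of $y$ disjoint from the closed set $C$ lies entirely below $C$ --- if some $w\in W$ satisfied $w\ge c$ for some $c\in C$, convexity would put $c\in[y,w]\subseteq W$. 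A similar small repair is needed in your maximality step: a convex $J\supsetneq I$ need not contain all of $f(I)$ or $f^{-1}(I)$, but convexity does force $f(x)\in J$ or $f^{-1}(x)\in J$ (whether the extra point of $J$ lies in some tile $f^{k}(I)$ or outside $H$), and that already yields $J\cap f(J)\neq\emptyset$. With these two repairs your argument is complete.
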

\begin{proof}
We may assume that $f$ is strictly increasing. If $L$ is locally compact, by zero-dimensionality there exists $a\in L$ such that $x<a$ and $\{y\in L: y<a\}$ is clopen and non-empty. Put $I = [a,f(a))$. Let us show that $I$ is as desired. By monotonicity,
$\{f^n(I):n\in \mathbb Z\}= \{..., [f^{-1}(a),a), [a,f(a)), [f(a),f^2(a)),. ..\}$. Enlarging any interval in this sequence would make that interval meet its image. Therefore, (3) is met. Visual inspection of the sequence is a convincing evidence  that
the union $\bigcup_n f^n(I)$ is convex. The union  is also open as the union of open sets. Since $f$ is fixed-point free, $f^n(I)$'s form a discrete collection, and hence, the union is closed. By our choice, $f^{n+1}(I) = [f^{n+1}(a), f^{n+2}(a))$, which guarantees that (2) is met. 
 
We now assume that $L$ is not locally compact. Let $dL$ be the smallest ordered compactification of $\langle L, <\rangle$. Since $f$ is a monotonic autohomeomorphism, $f$ has a unique continuous extension $\tilde f:dL\to dL$. Let $F\subset dL$ be the set of all fixed points of $\tilde f$. Since $f$ is fixed-point free, $F$ is a subset of  $dL\setminus L $. Since $L$ is not locally compact, there exists $a \in (dL\setminus L)\setminus F$. Put $I = [a, \tilde f(a)]_{dL}\cap L$. Clearly, $I$ is as desired.
\end{proof}

\par\bigskip\noindent
\begin{lem}\label{lem:generalconverse}
Let $L$ be a zero-dimensional GO-space and let  $f:L\to L$ be a fixed-point free monotonic autohomeomorphism. Then there exists a collection $\mathcal O$  of convex clopen subsets of $L$ with the following properties:
\begin{enumerate}
	\item The $f$-orbit of each $O\in \mathcal O$ is strongly discrete.
	\item $f^n(O)\cap f^m(O')=\emptyset$ for distinct $O,O'\in \mathcal O$ and $n,m\in \mathbb Z$.
	\item $\{f^n(O): n\in \mathbb Z, O\in \mathcal O\}$ is a cover of $L$ .
\end{enumerate}
\end{lem}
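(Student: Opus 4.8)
The plan is to build $\mathcal O$ by first decomposing $L$ into ``orbit tubes'' --- clopen, convex, $f$-invariant sets on which the whole problem localizes --- and then using Proposition~\ref{pro:maximalcolor} to extract one convex clopen $f$-color from each tube. Throughout I may assume, as in the proof of Proposition~\ref{pro:maximalcolor}, that $f$ is strictly increasing. The key reduction is this: if $\{T_i\}_{i\in \mathcal I}$ is a partition of $L$ into clopen convex $f$-invariant sets, then for each $i$ I pick a point $x_i\in T_i$ and let $O_i$ be the maximal convex clopen $f$-color supplied by Proposition~\ref{pro:maximalcolor} applied to $x_i$; I then set $\mathcal O=\{O_i:i\in \mathcal I\}$. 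Properties (1)--(3) of the lemma then fall out almost immediately. Property~(2) of Proposition~\ref{pro:maximalcolor} gives that the sets $f^n(O_i)$ are pairwise disjoint, and discreteness of $\{f^n(O_i):n\in\mathbb Z\}$ follows by the limit-point argument already used in Proposition~\ref{pro:strongdiscreteorbit} (a limit point of the orbit would, by continuity, be a fixed point of $f$), so each $f$-orbit is strongly discrete and (1) holds; since $T_i$ is $f$-invariant we have $f^n(O_i)\subset T_i$, and the $T_i$ are pairwise disjoint, giving (2); and since $\bigcup_n f^n(O_i)$ is the clopen convex set of Proposition~\ref{pro:maximalcolor}\,(1) sitting inside $T_i$ and generated by the orbit of a single color, it fills $T_i$, so the orbits cover $\bigcup_i T_i=L$, giving (3).

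So everything rests on producing the partition. For this I pass, exactly as in Proposition~\ref{pro:maximalcolor}, to the smallest ordered compactification $dL$ of $\langle L,<\rangle$ and to the unique monotone extension $\tilde f:dL\to dL$, and I let $F=\{t\in dL:\tilde f(t)=t\}$ be its (closed) fixed-point set. Two observations pin down $F$. First, since $f$ is fixed-point free, $F\cap L=\emptyset$, so $F\subset dL\setminus L$. Second, a strictly increasing autohomeomorphism of $L$ can have neither a largest nor a smallest element in its domain (such an element would be fixed), so the top and bottom points of $dL$ also lie in $dL\setminus L$. Now $dL\setminus F$ is open and splits into its maximal convex open pieces, the complementary intervals $(p,q)_{dL}$ of $F$; by the two observations every endpoint of such an interval --- whether an internal fixed point or an extremity of $dL$ --- lies outside $L$. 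Consequently, for each complementary interval $(p,q)_{dL}$ that meets $L$, the set $T=(p,q)_{dL}\cap L=\{t\in L:p<t<q\}$ is clopen in $L$ (its two defining half-lines are clopen because $p,q\notin L$), is convex, and is $f$-invariant (as $\tilde f$ fixes $p$ and $q$, it maps $(p,q)_{dL}$ onto itself). Because $F\cap L=\emptyset$, every point of $L$ lies in exactly one complementary interval, so the collection of these traces $T$ is precisely the desired partition of $L$ into clopen convex $f$-invariant tubes.

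It remains to match the Proposition's output to these tubes, that is, to see that $\bigcup_n f^n(O_i)=T_i$. A convex $f$-color cannot straddle an internal fixed point $p_1\in dL\setminus L$: if it contained points on both sides of $p_1$, then by convexity it would contain points $t\nearrow p_1$ from below together with $f(t)$, and since $t<f(t)<p_1$ one would have $f(t)\in O_i\cap f(O_i)$, contradicting the color property. Hence $O_i$ lies in a single complementary interval, necessarily the one $(p,q)_{dL}$ containing $x_i$, and since $\tilde f$ fixes $p,q$ the whole $\tilde f$-orbit of $O_i$ stays inside $(p,q)_{dL}$. Moreover that orbit is coinitial and cofinal there: between consecutive fixed points the monotone iterates converge to $p$ and to $q$ (their limits are fixed by continuity of $\tilde f$, hence are the adjacent members of $F$). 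Therefore $\bigcup_n f^n(O_i)=(p,q)_{dL}\cap L=T_i$, as required.

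The point I expect to be the main obstacle is the clopenness of the tubes, which is exactly where zero-dimensionality and the fixed-point-free hypothesis interact: it is the absence of any point of $L$ among the endpoints of the complementary intervals --- guaranteed by $F\cap L=\emptyset$ together with $L$ having no extremities --- that turns each order-theoretic interval trace into a genuinely clopen, convex, $f$-invariant piece. Once the partition is clopen, the reduction in the first paragraph carries no further difficulty.
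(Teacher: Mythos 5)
Your proof is correct in substance, and it takes a genuinely different route from the paper's. The paper proves the lemma by transfinite recursion: at stage $\alpha$ it deletes the saturations $\bigcup_{n}f^n(O_\beta)$ of the colors already chosen, shows that the residual set $L_\alpha$ is clopen (this is exactly where the maximality clause (3) of Proposition \ref{pro:maximalcolor} is used), and then applies Proposition \ref{pro:maximalcolor} to a point of $L_\alpha$ to obtain the next member of $\mathcal O$. You instead build a canonical partition up front: the fixed-point set $F$ of $\tilde f$ in the minimal ordered compactification carves $L$ into clopen, convex, $f$-invariant tubes, and you then show that the orbit of a single maximal color fills each tube exactly, using the convexity of $\bigcup_n f^n(O)$ from Proposition \ref{pro:maximalcolor}(1) together with the cofinality/coinitiality of orbits between consecutive fixed points. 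Your argument avoids transfinite recursion and yields strictly more structural information than the paper's proof makes explicit: the members of $\mathcal O$ are in bijection with the intervals between consecutive fixed points of $\tilde f$, and each saturation $\bigcup_n f^n(O)$ is precisely one tube. The paper's recursion, on the other hand, never needs the global structure of $F$ or any convergence analysis, so it is the more economical argument.

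Two places in your write-up need tightening. First, define the tubes as the traces on $L$ of the intervals $(p,q)_{dL}$ between \emph{consecutive points of $F$} (observing that the extremities of $dL$ are fixed by the increasing bijection $\tilde f$ and hence lie in $F$), rather than as ``maximal convex open pieces'' of $dL\setminus F$. These two descriptions do coincide here, but not automatically: in a non-connected compact LOTS a maximal convex piece of an open set can contain one of its endpoints (this happens at a jump, e.g.\ the piece $(0,1]$ of $([0,1]\cup[2,3])\setminus\{0,2,3\}$), and then your dichotomy ``internal fixed point or extremity of $dL$'' fails for that endpoint; ruling this out for $F$ requires an extra argument combining minimality of $dL$ with fixed-point freeness. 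With the consecutive-points definition, $p,q\in F$ holds by fiat, and everything you actually use ($p,q\notin L$, hence clopenness of the trace, and $\tilde f$-invariance of $(p,q)_{dL}$) follows at once. Second, your straddling argument treats only the case $t<f(t)<p_1$: if instead $f(t)<t$, pass to $f^{-1}(t)\in(t,p_1)$, which lies in $O_i$ by convexity and gives $t\in O_i\cap f(O_i)$; and if $p_1$ is a limit of $L$ from above rather than from below, run the mirror argument on that side. With these routine repairs your proof is complete.
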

\begin{proof} Without loss of generality, we may assume that $f$ is strictly increasing.
We will construct $\mathcal O=\{O_\alpha\}_\alpha$ recursively. Assume that $O_\beta$ is constructed for each $\beta<\alpha$ and the following properties  hold:
\begin{description}
	\item[\rm P1] $\bigcup_{n\in \mathbb Z} f^n(O_\beta)$ is clopen and convex.
	\item[\rm P2]  $f^n(O_\beta)\cap f^m(O_\beta)=\emptyset$ for any distinct integers $n$ and $m$. 
	\item[\rm P3] $f^n(O_\beta)$ is a maximal clopen convex $f$-color for any $n\in \mathbb Z$.
\end{description}
Note that P1 and P2 imply the following:
\begin{description}
	\item[\rm P4] The $f$-orbit of $O_\beta$ is strongly discrete.
\end{description}
\par\medskip\noindent
\underline {\it Construction of $O_\alpha$}: Let $L_\alpha = L\setminus \bigcup \{f^n(O_\beta):\beta<\alpha, n\in \mathbb Z\}$. If $L_\alpha$ is empty, then the recursive construction is complete and $\mathcal O = \{O_\beta:\beta<\alpha\}$. Otherwise,  we have $f^{-1}(f(L_\alpha)) = L_\alpha$. Let us show that $L_\alpha$ is clopen in $L$. Firstly, it is closed as the complement of the union of open sets. To show that it is open, fix $x\in L_\alpha$. Let $I$ be as in  Proposition  \ref{pro:maximalcolor} for given $x, f, L$. If $x$ were a limit point for $L\setminus L_\alpha$, then it would have contained some $f^n(O_\beta )$  for $\beta<\alpha$ and $n\in \mathbb Z$, which contradicts property P3.   Hence, $I$ is an open neighborhood of $x$ contained in $L_\alpha$. Since properties (1)-(3) of $I$ in the conclusion of Proposition \ref{pro:maximalcolor} coincide with the properties P1-P3, we can put $O_\alpha = I$.

\par\medskip\noindent
The family $\mathcal O=\{O_\alpha\}_\alpha$ is as desired by construction.
\end{proof}

\par\bigskip\noindent
Lemmas \ref{lem:general} and \ref{lem:generalconverse} form the following criterion.

\par\bigskip\noindent
\begin{thm}\label{thm:criterion}
Let $f$ be a fixed-point free autohomeomorphism on a zero-dimensional GO-space $X$. Then $f$ is potentially monotonic if and only if there exists a collection $\mathcal O$  of convex clopen subsets of $L$ with the following properties:
\begin{enumerate}
	\item The $f$-orbit of each $O\in \mathcal O$ is strongly discrete.
	\item $f^n(O)\cap f^m(O')=\emptyset$ for distinct $O,O'\in \mathcal O$ and $n,m\in \mathbb Z$.
	\item $\{f^n(O): n\in \mathbb Z, O\in \mathcal O\}$ is a cover of $L$ .
\end{enumerate}
\end{thm}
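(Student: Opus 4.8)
The plan is to recognize that this criterion is nothing more than the conjunction of Lemmas \ref{lem:general} and \ref{lem:generalconverse}, bridged by a single bookkeeping observation, so I would prove the two implications separately and invoke one lemma for each.

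For sufficiency, suppose a collection $\mathcal O$ with properties (1)--(3) exists. I would first note that these three conditions refer only to the topology of $X$ and to the clopenness of the members of $\mathcal O$; in particular, convexity plays no role in the hypotheses of Lemma \ref{lem:general}. Hence I can feed $\mathcal O$ directly into Lemma \ref{lem:general}, taking $<$ to be the original GO-compatible order on $X$. The lemma then produces a GO-compatible order $\prec$ with respect to which $f$ is strictly increasing, so $f$ is potentially monotonic by Definition \ref{def:potmon}. Observe that neither zero-dimensionality nor fixed-point freeness is needed for this direction.

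For necessity, suppose $f$ is potentially monotonic. By Definition \ref{def:potmon} I would fix a GO-compatible order $\prec$ on $X$ with respect to which $f$ is monotonic. The crucial observation is that $\langle X, \prec\rangle$ is again a zero-dimensional GO-space: the underlying topology is unchanged because $\prec$ is GO-compatible, and zero-dimensionality is a purely topological property. Moreover, $f$ is still a fixed-point free autohomeomorphism, now monotonic for $\prec$. Thus Lemma \ref{lem:generalconverse} applies to $\langle X, \prec\rangle$ and $f$, yielding a collection $\mathcal O$ of $\prec$-convex clopen sets satisfying (1)--(3). Since conditions (1)--(3) are stated entirely in topological terms, they continue to hold verbatim as conditions on subsets of $X$, which is exactly what the theorem demands.

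The only point demanding care is the meaning of the word \emph{convex} in the conclusion: the sets supplied by Lemma \ref{lem:generalconverse} are convex with respect to the re-ordering $\prec$, not necessarily with respect to the original order on $X$. Because the operative conditions (1)--(3) never mention an order, and because Lemma \ref{lem:general} does not require the members of $\mathcal O$ to be convex, this discrepancy is harmless in both directions. I therefore expect no genuine obstacle; the entire substance of the theorem has already been discharged in the two lemmas, and the proof reduces to invoking each one in turn while keeping track of which order is in force.
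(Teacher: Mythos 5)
Your proposal is correct and takes essentially the same route as the paper: the paper offers no proof beyond the single remark that Lemmas \ref{lem:general} and \ref{lem:generalconverse} ``form'' the criterion, which is precisely your two-implication argument. Your explicit bookkeeping---that Lemma \ref{lem:general} never uses convexity, and that in the converse direction Lemma \ref{lem:generalconverse} must be applied to $\langle X,\prec\rangle$ so its output is only $\prec$-convex---is a careful spelling-out of details the paper leaves implicit.
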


\par\bigskip\noindent
We next put one part (Lemma \ref{lem:general}) of the above criterion to a good use.

\par\bigskip\noindent
\begin{thm}\label{thm:main}
Let $X$ be a zero-dimensional subspace of the reals and let $f:X\to X$ be an autohomeomorphism with strongly discrete orbits at all points. Then there exists a GO-compatible order $\prec$ on $X$  such that $f$  is $\prec$-monotonic.
\end{thm}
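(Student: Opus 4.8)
The plan is to reduce everything to Lemma \ref{lem:general}: it suffices to produce a collection $\mathcal{O}$ of clopen subsets of $X$ whose orbits are strongly discrete, pairwise orbit-disjoint, and cover $X$. Note that Lemma \ref{lem:general} asks only for clopen pieces, not convex ones, which is all I will guarantee directly; convexity plays no role in that lemma's proof.

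First I would localize the hypothesis. For each $x\in X$ the strongly discrete orbit assumption supplies an open $U_x\ni x$ with $\{f^n(U_x):n\in\mathbb{Z}\}$ discrete and pairwise disjoint. Using zero-dimensionality I shrink $U_x$ to a clopen $V_x$ with $x\in V_x\subseteq U_x$; since $f^n(V_x)\subseteq f^n(U_x)$, the family $\{f^n(V_x)\}$ is again discrete and pairwise disjoint, so $V_x$ has a strongly discrete orbit. As $X\subseteq\mathbb{R}$ is second countable, the cover $\{V_x:x\in X\}$ admits a countable subcover $\{W_k:k\in\omega\}$, and then the $f$-orbits of the $W_k$ already cover $X$.

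The observation that drives the construction is that the union of a discrete family of closed sets is closed; hence for each $k$ the saturation $\mathrm{sat}(W_k):=\bigcup_n f^n(W_k)$ is closed, and as a union of open sets it is therefore clopen, and clearly $f$-invariant. I would now disjointify recursively. Set $S_{-1}=\emptyset$, and, given a clopen $f$-invariant $S_{k-1}$, put $W_k'=W_k\setminus S_{k-1}=W_k\cap(X\setminus S_{k-1})$; since $S_{k-1}$ is clopen, $W_k'$ is clopen, and as a subset of $W_k$ it has a strongly discrete orbit. Because $X\setminus S_{k-1}$ is $f$-invariant, $\mathrm{sat}(W_k')\subseteq X\setminus S_{k-1}$, so the orbit of $W_k'$ is disjoint from every earlier saturation; set $S_k=S_{k-1}\cup\mathrm{sat}(W_k')$, again clopen and $f$-invariant. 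Taking $\mathcal{O}=\{W_k':k\in\omega,\ W_k'\neq\emptyset\}$, property (1) of Lemma \ref{lem:general} holds since each $W_k'$ is clopen with discrete, pairwise disjoint orbit, property (2) holds because distinct $W_j',W_k'$ have disjoint saturations, and property (3) holds because $\mathrm{sat}(W_k)\subseteq S_k$ while the $W_k$ cover $X$. Lemma \ref{lem:general} then delivers the desired GO-compatible $\prec$ making $f$ monotonic.

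The only real obstacle is keeping the pieces clopen through the disjointification: subtracting the region covered so far means subtracting a set that is $f$-invariant and open but, a priori, not closed, which would destroy openness of the remainder. This is exactly what the ``discrete family of clopen sets has clopen union'' observation repairs, since it forces each finite stage $S_k$ to be clopen. It plays here the role that the maximal-color Proposition \ref{pro:maximalcolor} plays in the monotone converse (Lemma \ref{lem:generalconverse}), which is unavailable in this setting because $f$ is not assumed monotone.
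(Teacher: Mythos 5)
Your proposal is correct and follows essentially the same route as the paper: both reduce to Lemma \ref{lem:general} by fixing a countable clopen cover of $X$ whose members have strongly discrete orbits and then recursively disjointifying it by subtracting the orbit saturations of the pieces already chosen. Your write-up additionally spells out details the paper leaves implicit---shrinking to clopen neighborhoods via zero-dimensionality, invoking second countability for the countable subcover, and verifying that each finite union of saturations is clopen (a discrete family of closed sets has closed union)---which is exactly the justification the paper's construction tacitly relies on.
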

\begin{proof}
To prove the statement, we will construct a collection $\mathcal O$ as in the hypothesis of Lemma \ref{lem:general}. Fix a countable cover $\mathcal F = \{F_n:n\in \omega\}$ of $X$ so that each $F_i$ is clopen and has strongly discrete $f$-orbit.

\begin{description}
	\item[\it Step $0$]  Put $O_0=F_0$.
	\item[\it Assumption] Assume that $O_k$ is defined for $k<n$, clopen, and has strongly discrete $f$-orbit. In addition, assume that $\bigcup_{m\in\mathbb Z}f^m(O_i)$ misses $\bigcup_{m\in\mathbb Z}f^m(O_j)$, whenever $i\not = j$ and $i,j<n$.
	\item[\it Step $n$] Let $i_n$ be the smallest index such that $F_{i_n}$ is not covered by $\{f^m(O_i): i<n, m\in \mathbb Z\}$. Put $O_n = F_{i_n}\setminus \cup\{f^m(O_i): i<n, m\in \mathbb Z\}$. 
\end{description}
\par\bigskip\noindent
Construction is complete. The collection  $\mathcal O = \{O_n: n\in \omega\}$ has properties (1) and (2) in the hypothesys of Lemma \ref{lem:general} by construction.
To show (3), that is, the equality $X= \cup\{f^m(O_i): i\in \omega, m\in \mathbb Z\}$, fix any $x\in X$. Since $\mathcal F$ is a cover of $X$, there exists $n$ such that $x\in F_n$. If $x$ is not in $f^m(O_i)$ for some $i<n$ and $m\in \omega$, then $F_n$ is the first element in $\mathcal F$ that meets the construction requirements at step $n$. Therefore, $x\in O_n$.
\end{proof}

\par\bigskip\noindent
\begin{cor}\label{cor:potentiallymonotonicZ}
Every periodic-point free bijection on $\mathbb Z$ is potentially monotonic.
\end{cor}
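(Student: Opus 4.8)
The plan is to verify that $f$ satisfies the hypotheses of Theorem \ref{thm:main} and then invoke that theorem directly. Regarding $\mathbb Z$ as the subspace of integers of the real line, it is a zero-dimensional (in fact discrete) subspace of the reals, and any bijection of a discrete space is automatically an autohomeomorphism. Thus the only substantive thing to establish is that $f$ has a strongly discrete orbit at every point.

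First I would fix $x\in \mathbb Z$ and take $U=\{x\}$, which is clopen because $\mathbb Z$ is discrete. Then $f^n(U)=\{f^n(x)\}$ for each $n\in \mathbb Z$. The key observation is that periodic-point-freeness is precisely what guarantees the required disjointness: if $f^n(x)=f^m(x)$ for some distinct integers $n,m$, then applying $f^{-m}$ yields $f^{n-m}(x)=x$ with $n-m\neq 0$, making $x$ a periodic point of $f$. Hence $f^n(U)\cap f^m(U)=\emptyset$ for all distinct $n,m\in \mathbb Z$.

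Next I would note that discreteness of the space makes the remaining requirement automatic: every family of subsets of a discrete space is a discrete collection, since each point possesses a neighborhood (its own singleton) meeting at most one member of the family. Consequently $\{f^n(U):n\in \mathbb Z\}$ is a discrete family of pairwise disjoint sets, so the $f$-orbit of $x$ is strongly discrete. As $x$ was arbitrary, $f$ has strongly discrete orbits at all points.

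With the hypotheses verified, Theorem \ref{thm:main} supplies a GO-compatible order $\prec$ on $\mathbb Z$ with respect to which $f$ is monotonic, whence $f$ is potentially monotonic. I do not expect a genuine obstacle here; the only point requiring care is recognizing that the disjointness clause in the definition of a strongly discrete orbit is exactly the content of being periodic-point free, while the topological discreteness demanded by that definition is furnished for free by the discreteness of $\mathbb Z$.
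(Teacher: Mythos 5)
Your proposal is correct and is exactly the paper's (implicit) argument: the corollary appears there with no separate proof, as an immediate application of Theorem \ref{thm:main}, verified just as you do --- singleton neighborhoods, with periodic-point freeness giving disjointness of the iterates and discreteness of $\mathbb Z$ giving discreteness of the family. One caution: your blanket claim that \emph{every} family of subsets of a discrete space is a discrete collection is false as stated (a point lying in two members has no neighborhood meeting only one); what makes your argument go through is the pairwise disjointness you established immediately beforehand, which is all the application requires.
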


\par\bigskip\noindent
In contrast with Corollary \ref{cor:potentiallymonotonicZ}, we next observe that  not every periodic-point free bijection on $\mathbb Z$ is topologically equivalent to a monotonic map.
\par\bigskip\noindent
\begin{ex}\label{ex:notequivalent}
There exists a periodic-point free  bijection on $\mathbb Z$ that is not topologically equivalent to a monotonic map.
\end{ex}
\begin{proof}
First observe that every monotonic  bijection on $\mathbb Z$ is a shift. Therefore, any bijection on $\mathbb Z$ that is topologically equivalent to a monotonic map is also  topologically equivalent to a shift. It is observed in \cite[Example 1.2]{BW} that if a bijection $f$ on $\mathbb Z$ has infinitely many points with mutually disjoint orbits, then such a map is not topologically equivalent to a shift. Thus, any such fixed-point free  map is an example of a potentially monotonic map on $\mathbb Z$ that is not topologically equivalent to a monotonic map.
\end{proof}

\par\bigskip\noindent
\begin{rem}\label{rem:rem1}
Corollary \ref{cor:potentiallymonotonicZ} and Example \ref{ex:notequivalent} imply that the property of being potentially monotonic does not imply the property of being topologically equivalent to a monotonic map (with respect to the existing order).
\end{rem}

\par\bigskip\noindent
We can strengthen Corollary \ref{cor:potentiallymonotonicZ} as follows.
\par\bigskip\noindent
\begin{thm}
Let  $f$ be a periodic-point free bijection on  $\mathbb Z$. Then there exist an ordering $\prec$ and a binary operation $\oplus$  on $Z$ such that $\mathbb Z'=\langle \mathbb Z, \oplus, \prec\rangle$ is a discrete ordered topological group and $f$ is a shift in $\mathbb Z'$.
\end{thm}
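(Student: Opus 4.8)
The plan is to produce the desired structure by transporting a well-chosen model group along an orbit-matching bijection, rather than by ordering $\mathbb Z$ directly. Since $f$ is periodic-point free, the assignment $n\mapsto f^n$ turns $\mathbb Z$ into a \emph{free} $\mathbb Z$-set: every $f$-orbit is a bi-infinite copy of $\mathbb Z$, and the number $\lambda$ of orbits is either a positive integer or $\omega$. I would first record this dichotomy, since the two cases will require different model groups, and I would note that a free $\mathbb Z$-action is determined up to equivariant bijection by its number of orbits alone.

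Next I would exhibit, for each admissible value of $\lambda$, a discrete ordered abelian group $G$ with a distinguished element $c$ whose translation $T_c\colon x\mapsto x+c$ is order-increasing and has exactly $\lambda$ orbits, namely the cosets of $\langle c\rangle$, so that $[G:\langle c\rangle]=\lambda$. When $\lambda=k$ is finite I would take $G=\langle\mathbb Z,+,<\rangle$ and $c=k$, so that $\langle c\rangle=k\mathbb Z$ has index $k$ and $T_c$ is the increasing shift $x\mapsto x+k$. When $\lambda=\omega$ I would take $G=\mathbb Z\times\mathbb Z$ under the lexicographic order with first coordinate dominant, and $c=(1,0)$; a one-line check shows this order is discrete (the successor of $(a,b)$ is $(a,b+1)$) and that $T_c\colon(a,b)\mapsto(a+1,b)$ is increasing with the $\omega$ cosets $\mathbb Z\times\{b\}$ as orbits.

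The core of the argument is then a $\mathbb Z$-equivariant bijection $h\colon\mathbb Z\to G$ with $h\circ f=T_c\circ h$. I would choose a representative in each $f$-orbit, fix a bijection between the set of $f$-orbits and the set of $T_c$-orbits (possible since both have exactly $\lambda$ members), and extend equivariantly by sending the iterate $f^n$ of a representative to $T_c^n$ of the matched base point; freeness on both sides together with the equal orbit counts guarantees this is a genuine bijection. I would then transport the structure of $G$ back to $\mathbb Z$, setting $a\oplus b=h^{-1}(h(a)+h(b))$ and declaring $a\prec b$ iff $h(a)<_G h(b)$, so that $h$ becomes an isomorphism of ordered groups from $\mathbb Z'=\langle\mathbb Z,\oplus,\prec\rangle$ onto $G$.

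Finally I would verify the two conclusions. As $h$ is an order isomorphism onto the discrete ordered group $G$, the order topology of $\prec$ is discrete and hence coincides with the original topology of $\mathbb Z$; since every self-map of a discrete space is continuous, $\mathbb Z'$ is automatically a discrete ordered topological group. The relation $h\circ f=T_c\circ h$ transports to $f(a)=h^{-1}(c)\oplus a$, exhibiting $f$ as a shift in $\mathbb Z'$. I expect the only genuinely delicate point to be the finite-$\lambda$ case: one cannot model $G$ as a product $\mathbb Z\times\Lambda$ with $\Lambda$ a finite group indexing the orbits, because a nontrivial finite group admits no compatible order. Using instead the single group $\mathbb Z$ with the index-$k$ subgroup $k\mathbb Z$ sidesteps this obstruction, and keeping the two cases separate is precisely what keeps the discreteness of the transported order transparent.
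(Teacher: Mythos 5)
Your proof is correct, and its skeleton---observe that periodic-point freeness makes $n\mapsto f^n$ a free $\mathbb Z$-action, count the orbits, build an orbit-matching equivariant bijection onto a model group carrying a distinguished translation, and transport the order and operation back to $\mathbb Z$---is the same as the paper's, which takes a minimal set $M$ whose $f$-orbit covers $\mathbb Z$ (i.e.\ a set of orbit representatives) and splits into the cases $|M|$ finite and $|M|$ infinite. In the infinite case the two arguments essentially coincide: both use $\mathbb Z\times_l\mathbb Z$ (the paper's shift moves the second coordinate, yours the dominant first one; both translations are increasing for the lexicographic order, so this difference is cosmetic). The genuine divergence is the finite case, and there your closing remark is not a hypothetical worry but a real issue with the paper's own construction: for $|M|=n$ the paper models $f$ as translation by $(0,1)$ on $\mathbb Z_n\times_l\mathbb Z$ with component-wise addition, exactly the kind of product you rule out. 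When $n\ge 2$ that group has torsion, hence admits no translation-invariant linear order at all, and concretely the lexicographic order is not invariant: translation by $(1,0)$ sends the pair $(0,0)<(n-1,0)$ to $(1,0)>(0,0)$. So the paper's $\mathbb Z'$ is an ``ordered topological group'' only in the weak sense that its discrete topology is an order topology, not in the sense invoked elsewhere in the paper (e.g.\ in the proof of Theorem \ref{thm:rationals}, where being an ordered topological group is used to conclude that every shift is monotonic); the paper's conclusion that $f$ is potentially monotonic still survives, since the particular shift $(0,1)$ happens to be increasing. Your model---$\mathbb Z$ itself with the index-$k$ subgroup $k\mathbb Z$ and the translation $x\mapsto x+k$---produces a genuine linearly ordered group, order-isomorphic to $\langle\mathbb Z,+,<\rangle$, so in the finite-orbit case your argument proves the stronger, and presumably intended, form of the statement, whereas the paper's as written proves only the weaker reading.
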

\begin{proof}
Let $M$ be a minimal subset of $\mathbb Z$ with respect to the property that the $f$-orbit of $M$ covers $\mathbb Z$. 

If $|M|=n$, enumerate the elements of $M$ by $\mathbb Z_n$. Clearly, $\mathbb Z_n\times_l \mathbb Z$ is an ordered discrete  topological group with the component-wise addition. Define a bijection $h: \mathbb Z\to \mathbb Z_n\times_l \mathbb Z$ by letting $g(f^k(n_i)) = (i, k)$. Since any element of $\mathbb Z$ is in the $f$-orbit of exactly one element of $M$, the correspondence is well-defined and is a bijection. Since $g$ is a homeomorphism, we will next abuse notation and will identify $f^k(x_i)$ with $(i,k)$. Let us apply $f$ to $(i,k)$. We have  $f(f^k(x_i)) = f^{k+1}(x_i)$, and the latter is identified with $(i, k+1)$. Therefore, $f$ is a shift by $(0,1)$ in $\mathbb Z'$.

If $M$ is infinite, enumerate its elements by integers as $M=\{n_i:i\in \mathbb Z\}$.  Define  $h: \mathbb Z\to \mathbb Z\times_l \mathbb Z$ by letting $g(f^k(n_i)) = (i, k)$. Argument similar to the $\mathbb Z_n$ case shows that the ordering on $\mathbb Z$ induced by $h$ is as desired.
\end{proof}

\par\bigskip\noindent
Note that the above statement does not hold for continuous periodic-point free bijections on the rationals. Indeed, as shown in \cite[Example 2.5]{Buz} there exists a continuous periodic-point free bijection on $\mathbb Q$ with a point with non-strongly discrete fiber. The mentioned example \cite[Example 2.5]{Buz} is constructed to satisfy the hypothesis of \cite[Lemma 2.4]{Buz}, which is a stronger case of not having discrete fibers.   Nonetheless, the following takes place.

\par\bigskip\noindent
\begin{thm}\label{thm:rationals}
A fixed-point free autohomeomorphism $f:\mathbb  Q\to \mathbb Q$ is potentially monotonic if and only if $f$ is topologically equivalent to a shift.
\end{thm}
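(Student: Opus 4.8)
The plan is to prove the two implications separately, with the forward direction being immediate and the reverse carrying all the weight. For the implication that topological equivalence to a shift yields potential monotonicity, I would simply note that a shift on $\mathbb Q$ is monotonic, so such an $f$ is topologically equivalent to a monotonic map and hence potentially monotonic by the observation recorded after Definition \ref{def:potmon1}. For the reverse implication, suppose $f$ is potentially monotonic and fix, by Definition \ref{def:potmon}, a GO-compatible order $\prec$ with respect to which $f$ is monotonic; without loss of generality $f$ is $\prec$-increasing. Since $\prec$ is GO-compatible, $\langle \mathbb Q,\prec\rangle$ carries the usual topology of $\mathbb Q$, so it is a countable, zero-dimensional, metrizable space without isolated points, and by Sierpi\'nski's characterization every nonempty clopen subset of it is homeomorphic to $\mathbb Q$. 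I would then apply Lemma \ref{lem:generalconverse} to $\langle \mathbb Q,\prec\rangle$ to obtain a collection $\mathcal O$ of clopen sets whose $f$-orbits are strongly discrete, pairwise orbit-disjoint, and cover $\mathbb Q$. As $\mathbb Q$ is countable, $\mathcal O$ is countable of some size $\kappa$ with $1\le\kappa\le\omega$, and the family $\{f^n(O):n\in\mathbb Z,\ O\in\mathcal O\}$ partitions $\mathbb Q$ into clopen pieces, grouping into the $f$-invariant clopen sets $A_O=\bigcup_{n\in\mathbb Z}f^n(O)$.

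The first real step is to show that each restriction $f|_{A_O}$ is topologically equivalent to the standard shift $s(x)=x+1$ on $\mathbb Q$. Fixing an irrational $\theta$, put $P=\{q\in\mathbb Q:\theta<q<\theta+1\}$, which is clopen and satisfies $\mathbb Q=\bigsqcup_n s^n(P)$ (the omitted cut points lie in $\theta+\mathbb Z$ and are irrational). Since $O$ and $P$ are nonempty clopen, hence copies of $\mathbb Q$, I would fix a homeomorphism $\phi_O:O\to P$ and extend it by declaring $\phi_O|_{f^n(O)}=s^n\circ\phi_O\circ f^{-n}$. A direct check gives $\phi_O\circ f=s\circ\phi_O$ on $A_O$, and $\phi_O$ is a bijection of $A_O=\bigsqcup_n f^n(O)$ onto $\bigsqcup_n s^n(P)=\mathbb Q$. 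Crucially, because the pieces $f^n(O)$ and $s^n(P)$ are \emph{open}, continuity is a local matter, so $\phi_O$ and its inverse are continuous; thus $\phi_O$ is a homeomorphism and $f|_{A_O}$ is conjugate to $s$. Note that no order information is used here beyond clopenness of the pieces.

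The heart of the argument is the self-similarity of the shift: I claim $s$ is topologically equivalent to the disjoint union of $\kappa$ copies of itself for each $1\le\kappa\le\omega$. To see this, choose irrationals $\theta=\theta_0<\theta_1<\theta_2<\cdots$ inside $(\theta,\theta+1)$, finitely many terminating below $\theta+1$ when $\kappa<\omega$ and with $\theta_k\uparrow\theta+1$ when $\kappa=\omega$, and set $C_j=\{q\in\mathbb Q:\theta_{j-1}<q<\theta_j\}$. Each $C_j$ is a nonempty clopen set of diameter below $1$, the orbits $\{s^n(C_j):n\in\mathbb Z,\ j<\kappa\}$ partition $\mathbb Q$ (again the missing points are irrational), and $U_j=\bigcup_n s^n(C_j)$ is a clopen $s$-invariant set for which $C_j$ is a clopen cross-section. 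By the cross-section construction of the previous paragraph, $s|_{U_j}\cong s$, so $s\cong\bigsqcup_{j<\kappa}s$. Combining the steps, $f=\bigsqcup_{O\in\mathcal O}(f|_{A_O})\cong\bigsqcup_{O\in\mathcal O}s=\bigsqcup_{j<\kappa}s\cong s$, where the first equivalence glues the homeomorphisms $\phi_O$ across the clopen pieces $A_O$; this exhibits $f$ as topologically equivalent to a shift.

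The step I expect to be the main obstacle is the self-similarity claim, especially the case $\kappa=\omega$, together with the repeated use of gluing conjugacies across clopen invariant pieces. I would isolate two supporting facts and state them explicitly: first, that a bijection which is a homeomorphism on each of a partition of the domain and range into \emph{open} pieces is automatically a homeomorphism, which is what legitimizes both $\phi_O$ and the final gluing; and second, that a countable disjoint union of copies of $\mathbb Q$ is again homeomorphic to $\mathbb Q$, so all the ambient spaces and their invariant clopen subsystems really are copies of $\mathbb Q$ and Sierpi\'nski's characterization applies at each stage. With those in hand, the remaining verifications (that the $C_j$ are clopen with irrational endpoints, that their orbits tile $\mathbb Q$, and that the conjugacy equation holds) are routine.
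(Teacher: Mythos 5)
Your proposal is correct, and the main difference from the paper is that you prove in-line what the paper handles by citation. For the forward implication the skeleton is the same: both you and the paper pass to the witnessing order $\prec$ and apply Lemma \ref{lem:generalconverse} to $\langle\mathbb Q,\prec\rangle$ to obtain the decomposition $\mathcal O$. At that point the paper simply invokes ``the argument of Theorem 2.3 in \cite{Buz}'' to turn the decomposition into a conjugacy with a shift, whereas you carry out the conversion yourself: the cross-section conjugacy $\phi_O|_{f^n(O)}=s^n\circ\phi_O\circ f^{-n}$ (with Sierpi\'nski's theorem supplying $\phi_O$ and openness of the pieces supplying continuity of the glued map) plus the self-similarity $s\cong\bigsqcup_{j<\kappa}s$ obtained from irrational cut points; the two supporting facts you isolate are exactly what makes the gluing legitimate, and this is presumably the content of the cited argument. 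For the backward implication you genuinely simplify: the paper routes through \cite[Theorem 2.8]{BW}, re-equipping $\mathbb Q$ with a compatible group operation in which $f$ becomes a shift and then ordering that group, while you just observe that a shift is monotonic in the usual order, so conjugacy to a shift gives potential monotonicity directly from the remark following Definition \ref{def:potmon1}. Your route is shorter and loses nothing; the paper's buys only the extra (here unneeded) information that the new order can be taken to come from a group structure.

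One shared caveat: your ``without loss of generality $f$ is $\prec$-increasing'' is the same silent reduction the paper makes inside the proofs of Proposition \ref{pro:maximalcolor} and Lemma \ref{lem:generalconverse}, and it is not actually harmless. A $\prec$-decreasing fixed-point free homeomorphism of $\mathbb Q$ can be an involution (split $\mathbb Q$ into the two clopen halves determined by an irrational and swap them by an order-reversing bijection); for such an $f$ every point has period $2$, so $f$ is not conjugate to a shift, and no family with $f^n(O)\cap f^m(O)=\emptyset$ can exist, even though $f$ is monotonic and hence potentially monotonic. So the decreasing case is a genuine exception to the lemma and to the theorem as literally stated. Since your construction never actually uses increasingness---it consumes only the output of Lemma \ref{lem:generalconverse}---your proof is valid exactly to the extent that the paper's lemma is; simply delete the unused reduction, or note that ``monotonic'' must be read as ``strictly increasing'' throughout.
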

\begin{proof}
($\Rightarrow$) Since $f$ is potentially monotonic, there exists a collection $\mathcal O$ as in the conclusion of Lemma \ref{lem:generalconverse}. The argument of Theorem 2.3 in \cite{Buz} shows that that $f$ with such a collection is topologically equivalent to a non-trivial shift. 

\par\medskip\noindent
($\Leftarrow$) It is proved in \cite[Theorem 2.8]{BW} that
a periodic-point free homeomorphism $h$ on $\mathbb Q$  is topologically equivalent to a shift if and only if one can introduce a group operation $\oplus$ on  $\mathbb Q$ compatible with the topology of $\mathbb Q$ so that the topological group $\langle \mathbb Q , \oplus\rangle$ is continuously isomorphic to $\mathbb Q$ and $h$  is a shift with respect to new operation. Clearly such an $\langle \mathbb Q , \oplus\rangle$ is an ordered topological group,  and hence,  any shift is monotonic.  Therefore, $f$ is potentially monotonic.
\end{proof}

\par\bigskip\noindent
Recall that given a continuous selfmap $f:X\to X$, {\it the chromatic number of $f$} is the least number of $f$-colors needed to cover $X$. 

\par\bigskip\noindent
\begin{thm}\label{thm:strictlymonotoniciff2colored}
Let $f$ be a fixed-point free  autohomeomorphism on a zero-dimensional GO-space $L$. If $f$ is potentially monotonic, then the chromatic number of $f$ is $2$.
\end{thm}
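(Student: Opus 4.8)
The plan is to establish the two bounds ``chromatic number $\ge 2$'' and ``chromatic number $\le 2$'' separately, the first being essentially immediate and the second exploiting the orbit decomposition furnished by Theorem~\ref{thm:criterion}.

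For the lower bound, I would first observe that $L$ cannot be covered by a single $f$-color. Indeed, a single covering color $C$ must equal $L$, and since $f$ is a bijection we have $f(L)=L$, whence $C\cap f(C)=L\cap f(L)=L\neq\emptyset$ (as $L$ is nonempty). Thus $L$ is not an $f$-color and the chromatic number of $f$ is at least $2$.

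For the upper bound, I would invoke the criterion. Since $f$ is a fixed-point free potentially monotonic autohomeomorphism on the zero-dimensional GO-space $L$, Theorem~\ref{thm:criterion} supplies a collection $\mathcal O$ of convex clopen sets satisfying (1)--(3). Properties (1) and (2) together make the sets $\{f^n(O):n\in\mathbb Z,\ O\in\mathcal O\}$ pairwise disjoint, while property (3) makes them cover $L$, so they partition $L$. I would then split this partition by parity, setting
$$A=\bigcup\{f^{2n}(O):n\in\mathbb Z,\ O\in\mathcal O\},\qquad B=\bigcup\{f^{2n+1}(O):n\in\mathbb Z,\ O\in\mathcal O\}.$$
Since $f$ is a bijection and $f(f^{k}(O))=f^{k+1}(O)$, we get $f(A)=B$ and $f(B)=A$, so $A\cap f(A)=A\cap B=\emptyset$ and likewise $B\cap f(B)=\emptyset$; moreover $A\cup B=L$.

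The only point that requires care is that an $f$-color must be \emph{closed}, whereas $A$ and $B$ are a priori only unions of (infinitely many) clopen sets, and such unions need not be closed. Here the parity trick pays off: each $f^n(O)$ is clopen (as $O$ is clopen and $f$ is a homeomorphism), so both $A$ and $B$ are open; since they partition $L$, each is the complement of the other, and an open set with open complement is clopen. Hence $A$ and $B$ are two $f$-colors covering $L$, giving chromatic number at most $2$, and combining the two bounds yields exactly $2$. I expect the delicate step to be precisely this closedness verification, which dissolves exactly because we use a \emph{two}-coloring (so that each class is the complement of an open set) rather than attempting to economize further.
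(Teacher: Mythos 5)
Your proof is correct and takes essentially the same route as the paper's: the paper likewise extracts the orbit decomposition $\mathcal O$ (via Lemma \ref{lem:generalconverse}, after re-ordering so that $f$ is monotonic, which is where your appeal to Theorem \ref{thm:criterion} amounts to the same thing) and splits the iterates $f^n(O)$ by parity into two sets $A$ and $B$. The only difference is that you spell out what the paper compresses into ``clearly''---the clopenness of $A$ and $B$ via the complementation trick and the trivial lower bound---which is filling in detail rather than a different approach.
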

\begin{proof}
($\Rightarrow$) Since the chromatic number of $f$ is a purely topological property not attached to an order, we may assume that $f$ is strictly monotonic. Let $\mathcal O$ be as in the conclusion of  \ref{lem:generalconverse} for the given $f$ and $L$. Put $A=\cup\{f^n(O): n\  is\ an \ even\ integer,\  O\in \mathcal O\}$ and $B=\cup\{f^n( O): n\ is\ an\  odd\ integer,\ n\in \mathcal O\}$. Clearly, $\{A,B\}$ is cover of $L$ by colors.
\end{proof}

\par\bigskip\noindent
Theorem \ref{thm:strictlymonotoniciff2colored} and Remark \ref{rem:rem1} prompt the following question.

\par\bigskip\noindent
\begin{que}
Let $f$ be a  periodic point free  homeomorphism on a zero-dimensional GO-space $L$. Let the chromatic number of $f$ be 2. Is $f$ potentially monotonic?
\end{que}

\par\bigskip\noindent
Theorem \ref{thm:main} prompts the following question.

\par\bigskip\noindent
\begin{que}\label{que:question1}
Let $X$ be a GO-space and let $f:X\to X$ be an autohomeomorphism with strongly discrete orbits at all points.Is $f$ potentially monotonic? What if $X$ is hereditarily paracompact?
\end{que}

\par\bigskip


\begin{thebibliography}{99}

\bibitem{Buz}
R. Buzyakova, {\it On Monotonic Fixed-Point Free Bijections on Subgroups of $R$}, Applied General Topology, Vol 17, No 2 (2016), 83--91.

\bibitem{BW} R. Buzyakova and J. West, {\it Three Questions on Special Homeomorphisms on Subgroups of $R$ and $R^\infty$}, Questions and Answers in General Topology, Volume 36, Number 1 (2018) pp. 1-8

\bibitem{Eng}
R. Engelking, {\it General Topology}, PWN, Warszawa, 1977.

\bibitem{BL}
H. Bennet and D. Lutzer, {\it Linearly Ordered and Generalized Ordered Spaces}, Encyclopedia
of General Topology, Elsevier, 2004.


\bibitem{Lut}
D. Lutzer, {\it Ordered Topological Spaces, Surveys in General Topology}, G. M. Reed., Academic
Press, New York (1980), 247-296.


\bibitem{vM} 
J. van Mill, {\it The infinite-dimensional topology of function spaces}, Elsevier, Amsterdam,
2001.



\end{thebibliography}
\end{document}